\newcommand{\mcH}{\mathrel{\mathcal{H}}}
\newcommand{\mcR}{\mathrel{\mathcal{R}}}
\newcommand{\mcL}{\mathrel{\mathcal{L}}}
\newcommand{\mcD}{\mathrel{\mathcal{D}}}
\newtheorem{prop}{Proposition}[section]
\newtheorem{thm}[prop]{Theorem}
\newtheorem{cor}[prop]{Corollary}
\newtheorem{lem}[prop]{Lemma}
\theoremstyle{definition}
\newtheorem{defn}[prop]{Definition}
\newtheorem{rem}[prop]{Remark}
\newlist{thmenum}{enumerate}{10}
\setlist[thmenum,1]{label=\textnormal{(\alph*)}}
\setlist[thmenum,2]{label=\textnormal{(\roman*)}}
\begin{document}

\title[Morita Equivalence to Graph Inverse Semigroups]{A note on Morita equivalence to graph inverse semigroups}

\author[M. Du Preez]{Martha Du Preez}
\address{Department of Mathematics\\
The University of Texas at Tyler\\
3900 University Boulevard\\
Tyler, TX 75799}
\email{mdupreez@patriots.uttyler.edu}

\author[R. Grimley]{Robert Grimley}
\address{Department of Mathematics\\
Indiana University\\
Bloomington, IN 47401}
\email{rgrimley@iu.edu}

\author[E. Lira]{Evan Lira}
\address{Department of Mathematics\\
Clarkson University\\
8 Clarkson Ave.\\
Potsdam, NY 13699}
\email{liraem@clarkson.edu}

\author[D. Milan]{David Milan}
\address{Department of Mathematics\\
The University of Texas at Tyler\\
3900 University Boulevard\\
Tyler, TX 75799}
\email{dmilan@uttyler.edu}

\author[S. Ramamurthy]{Shreyas Ramamurthy}
\address{Department of Mathematics\\
University of California, Berkeley\\
970 Evans Hall \#3840\\
Berkeley, CA 94720-3840}
\email{shreyas.ramamurthy@berkeley.edu}

\thanks{The authors were supported by an NSF grant (DMS-2149921).}

\date{\today}
\subjclass[2010]{20M18}

\begin{abstract} We characterize the inverse semigroups that are Morita equivalent to graph inverse semigroups. We also consider a generalization to inverse semigroups associated with left cancellative categories.
\end{abstract}

\maketitle

\section{Introduction}

We characterize the inverse semigroups that are Morita equivalent to graph inverse semigroups. They are the combinatorial inverse semigroups $S$ with $0$ satisfying two additional conditions on the semilattice of idempotents: (1) if $e,f$ are incomparable idempotents with $e,f \leq g$ for some $g \in E(S)$, then $ef = 0$, and (2) for $0 \neq e \leq f$ there are finitely many idempotents between $e$ and $f$ in the natural partial order. A key tool in our proof is a graph $\Gamma_S$ associated with any combinatorial inverse semigroup $S$ satisfying (2). We show that if $S$ is Morita equivalent to some graph inverse semigroup $S(\Gamma)$ then the graph $\Gamma$ must be isomorphic to $\Gamma_S$ (Corollary \ref{cor:uniquegraph}). 

We also consider a generalization of these results to inverse semigroups associated with left cancellative categories. We show that the path category defined in \cite{REU2018} characterizes Morita equivalence of such inverse semigroups.
\section{Preliminaries}
An \emph{inverse semigroup} is a semigroup $S$ such that for each $s$ in $S$ there exists a unique $s^*$ in $S$ such that
\[
	s = ss^*s \quad \text{and}  \quad s^* = s^* s s^*.
\]
The set of idempotents of $S$, denoted $E(S)$, is a commutative subsemigroup of $S$. The natural partial order is defined on $S$ by $s \leq t$ if and only if $s = te$ for some $e \in E(S)$. Green's relations are quite simple to define for inverse semigroups: we have $s \mcL t$ if and only if $s^*s = t^*t$, $s \mcR t$ if and only if $ss^* = tt^*$, and $\mcH \,=\, \mcL \cap \mcR$. Moreover, $s \mcD t$ if and only if there exists $x \in S$ such that $s^*s = x^*x$ and $tt^* = xx^*$. For $e,f \in E(S)$ we have $e \mcD f$ if and only if there exists $x \in S$ with $e = x^*x$ and $f = xx^*$. 

A directed graph $\Gamma = (\Gamma^0, \Gamma^1, r, s)$ consists of sets $\Gamma^0$, $\Gamma^1$ and functions $r,s : \Gamma^1 \to \Gamma^0$ called the \emph{range} and \emph{source} maps, respectively. The elements of $\Gamma^0$ are called \emph{vertices}, and the elements of $\Gamma^1$ are called \emph{edges}. Given an edge $e$, $r(e)$ denotes the range vertex of $e$ and $s(e)$ denotes the source vertex. We denote by $\Gamma^*$ the collection of finite directed paths in $\Gamma$. The range and source maps $r,s$ can be extended to $\Gamma^*$ by defining $r(\alpha) = r(\alpha_n)$ and $s(\alpha) = s(\alpha_1)$ for a path $\alpha = \alpha_n \alpha_{n-1} \cdots \alpha_1$ in $\Gamma^*$. If $\alpha = \alpha_n \alpha_{n-1} \cdots \alpha_1$ and $\beta = \beta_m \beta_{m-1} \cdots \beta_1$ are paths with $s(\alpha) = r(\beta)$, we write $\alpha \beta$ for the path $\alpha_n \cdots \alpha_1 \beta_m \cdots \beta_1$.

The \textit{graph inverse semigroup} of the directed graph $\Gamma$ is the set
\[ S_{\Gamma} = \{(\alpha,\beta) \in \Gamma^* \times \Gamma^* : s(\alpha) = s(\beta) \} \cup \{ 0 \} \]
with products defined by
\[ (\alpha,\beta) (\mu,\nu) = \left\{\begin{array}{ll}
        (\alpha \mu', \nu) & \mbox{if $\mu = \beta \mu'$} \\
        (\alpha,\nu\beta') & \mbox{if $\beta = \mu \beta'$} \\
        0 & \mbox{otherwise}
   \end{array} \right. \]
The inverse is given by $(\alpha,\beta)^{*} = (\beta,\alpha)$. 

Let $T$ be an inverse subsemigroup of an inverse semigroup $S$. Lawson \cite{LawsonEnlargement} defined $S$ to be an \emph{enlargement} of $T$ if $STS = S$ and $TST = T$ and made the case that the concept of enlargement should be part of a larger theory of Morita equivalence for inverse semigroups. Later, Steinberg \cite{SteinbergMorita} developed a general theory of Morita equivalence of inverse semigroups which is motivated by similar definitions in the theory of $C^*$-algebras and depends on the notion of a Morita context. 

\begin{defn} A \emph{Morita context} consists of a 5-tuple $(S,T,X, \langle,\rangle, [,])$ where $S$ and $T$ are inverse semigroups, $X$ is a set equipped with a left action by $S$ and a right action by $T$ that commute, and
\[
	\langle \;,\,\rangle: X \times X \to S, \quad [\;, \,]: X \times X \to T
\]
are surjective functions satisfying the following for $x,y,z$ in $X$, $s$ in $S$, and $t$ in $T$:
\begin{enumerate}

\item $\langle sx, y \rangle = s\langle x, y \rangle$,

\item $\langle y, x \rangle = \langle x, y \rangle^*$,

\item $\langle x, x \rangle x = x$,

\item $[x, yt] = [x,y]t$,

\item $[y,x] = [x,y]^*$,

\item $x[x,x] = x$, and

\item $\langle x, y \rangle z = x [y,z]$.

\end{enumerate}
\end{defn}    
We say that $S$ and $T$ are \emph{strongly Morita equivalent} if there exists a Morita context $(S,T,X, \langle,\rangle, [,])$. There is a useful characterization of this concept in terms of category equivalence. The \emph{idempotent splitting} (also called the Cauchy completion or the Karoubi envelope) of an inverse semigroup $S$ is a category $C(S)$ with objects $E(S)$ and morphisms $\{(e,s,f): e,f \in E(S), s \in eSf\}$. Composition is given by
\[
	(e,s,f)(f,t,g) = (e, st, g).
\]

An important subcategory is $L(S)$ which has the same set of objects but only the morphisms of the form $(f,s,s^*s)$ where $ss^* \leq f$. Note that $s \in eSf$ if and only if $ss^* \leq e$ and $s^*s \leq f$. The isomorphisms in $C(S)$ (and in $L(S)$) are the elements of the form $(s^*s, s, ss^*)$. It follows that two objects $e,f \in E(S)$ are isomorphic in either category if and only if $e \mathcal{D} f$. The following theorem was proved in \cite{FunkLawsonSteinberg} but also relies on \cite[Corollary 5.2]{SteinbergMorita} and results from \cite{FunkTopos}.

\begin{thm}[Funk, Lawson, Steinberg]\label{thm:ME} Let $S$ and $T$ be inverse semigroups. The following are equivalent:
\begin{enumerate}
\item $S$ and $T$ are strongly Morita equivalent.
\item The categories $C(S)$ and $C(T)$ are equivalent.
\item The categories $L(S)$ and $L(T)$ are equivalent.
\end{enumerate}
\end{thm}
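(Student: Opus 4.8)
The plan is to establish $(1)\Leftrightarrow(2)$ and $(1)\Leftrightarrow(3)$, routing everything through Lawson's notion of an enlargement, in the spirit of \cite{SteinbergMorita} and \cite{FunkLawsonSteinberg}. For $(1)\Rightarrow(2)$ and $(1)\Rightarrow(3)$ I would realise a Morita context as a common enlargement. Given a Morita context $(S,T,X,\langle\,,\rangle,[\,,])$, build the \emph{Morita semigroup} $M$: take the disjoint union $S\sqcup X\sqcup\overline X\sqcup T$ together with an adjoined zero, where $\overline X$ is a formal copy of $X$ (to be read as $X^{*}$), and organise it as a semigroup of $2\times 2$ ``matrix units'' $\left(\begin{smallmatrix}S & X\\ \overline X & T\end{smallmatrix}\right)$: a product of an element in cell $(i,j)$ with one in cell $(k,l)$ is $0$ unless $j=k$, in which case it lies in cell $(i,l)$ and is computed using $S$- and $T$-multiplication on the diagonal, the $S$- and $T$-actions on $X$ and $\overline X$, and the pairing $\langle x,y\rangle$ (resp.\ $[y,x]$) for a product in $X\cdot\overline X$ (resp.\ $\overline X\cdot X$). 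The seven axioms are exactly what is required to check associativity, to check that every element has a unique inverse with commuting idempotents --- so that $M$ is an inverse semigroup with $\overline x$ the inverse of $x$ --- and to check that $\langle x,x\rangle$ and $[x,x]$ act as identities. The inclusions of $S$ and $T$ are inverse subsemigroup embeddings; $SMS=S$ and $TMT=T$ hold by construction, while surjectivity of the two pairings gives $MSM=M$ and $MTM=M$, so $M$ is an enlargement of both $S$ and $T$.

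The engine of the argument is an \emph{enlargement lemma}: if $M$ is an enlargement of an inverse subsemigroup $N$, then the inclusion functor $C(N)\hookrightarrow C(M)$, and verbatim $L(N)\hookrightarrow L(M)$, is an equivalence of categories. Faithfulness is immediate. Fullness reduces to the identity $eMf=eNf$ for $e,f\in E(N)$, which holds since $eMf\subseteq NMN=N$ and therefore $eMf=e(eMf)f\subseteq eNf\subseteq eMf$. Essential surjectivity is the statement that every $\mcD$-class of $M$ meets $N$; this simultaneously handles $L(N)\hookrightarrow L(M)$, because in either category two objects are isomorphic precisely when the corresponding idempotents are $\mcD$-related. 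One proves it by a standard manipulation: given $e\in E(M)$, use $MNM=M$ to write $e=atb$ with $t\in N$, replace $a,b$ by $ea,be$, and then exploit $e=e^{3}$ to produce an element $x$ with $x^{*}x=e$ and $xx^{*}\in E(N)$. Applying the lemma to $S\hookrightarrow M\hookleftarrow T$ yields $C(S)\simeq C(M)\simeq C(T)$ and $L(S)\simeq L(M)\simeq L(T)$, establishing $(1)\Rightarrow(2)$ and $(1)\Rightarrow(3)$.

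For the converses $(2)\Rightarrow(1)$ and $(3)\Rightarrow(1)$, I would manufacture a Morita context from a category equivalence, following the template by which equivalences of module categories over rings are realised by progenerators, specialised to this setting. Note first that $C(S)$ is an inverse category --- each morphism $(e,s,f)$ has the ``adjoint'' $(f,s^{*},e)$ --- and that this $*$-structure is canonical (determined by the underlying category alone), so an equivalence $F\colon C(S)\to C(T)$ with quasi-inverse $G$ automatically preserves it. One then assembles the bimodule $X$ from the hom-sets $\operatorname{Hom}_{C(T)}(Fe,f)$ with $e\in E(S)$ and $f\in E(T)$, defines the $S$- and $T$-actions from functoriality of $F$ together with the adjoint operation, and defines $\langle\,,\rangle$ and $[\,,]$ from composition in $C(T)$ transported through $G$ and the natural isomorphism $FG\cong\id$; surjectivity of the pairings reflects fullness and essential surjectivity of $F$, and the remaining axioms reduce to naturality and the triangle identities. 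The argument starting from an equivalence $L(S)\to L(T)$ is the same. (It is also convenient to observe directly that $L(S)$ is the wide subcategory of split epimorphisms of $C(S)$: a morphism $(e,s,f)$ lies in $L(S)$ iff $s^{*}s=f$, which holds iff $(e,s,f)$ is a split epimorphism, with section $(f,s^{*},e)$; since equivalences preserve and reflect split epimorphisms, the $L$-statements can alternatively be read off from the $C$-statements.)

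The step I expect to be the main obstacle is essential surjectivity in the enlargement lemma --- showing that every $\mcD$-class of an enlargement meets the distinguished subsemigroup. This is the single genuinely semigroup-theoretic point; by contrast, the verification that the $2\times 2$ construction is an inverse semigroup, though it is where all seven Morita axioms are consumed, is purely computational, and the passage between Morita contexts and category equivalences is formal once the enlargement lemma is in hand.
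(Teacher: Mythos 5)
Your direction $(1)\Rightarrow(2),(3)$ has a genuine gap, and it is not cosmetic: the linking semigroup $M=S\sqcup X\sqcup\overline{X}\sqcup T\sqcup\{0\}$ you build is \emph{not} an enlargement of $S$ and $T$. Because of the adjoined zero, $SMS$ contains $s\cdot 0\cdot s'=0$ (and indeed $sxs'=0$ for $x\in X$), so $SMS=S\cup\{0\}\neq S$; your assertion that ``$SMS=S$ and $TMT=T$ hold by construction'' is false. What $M$ is an enlargement of is $S^{0}=S\cup\{0\}$ and $T^{0}=T\cup\{0\}$, and adjoining a zero changes the Morita class: the new idempotent $0$ is $\mathcal{D}$-related only to itself, so $C(S^{0})$ has an extra isomorphism class of objects and is not equivalent to $C(S)$. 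A concrete test case kills the argument: run your construction on the one-point context exhibiting the trivial group $G=\{1\}$ as strongly Morita equivalent to itself. Then $M$ is the five-element Brandt semigroup $B_2$, whose idempotent splitting has two isomorphism classes of objects while $C(G)$ has one, so $C(S)\simeq C(M)$ already fails at the first link. (A quick sanity check you could have made: possessing a zero is a Morita invariant --- the paper itself uses this, via Steinberg's Corollary 5.2, to add ``has a $0$'' to Corollary \ref{cor:MEgraph} --- and your $M$ always has a zero even when $S$ does not.) Identifying the adjoined zero with a zero of $S$ or $T$ does not rescue the construction, since the context axioms do not make the actions or pairings annihilate anything.

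The pieces surrounding this are sound: the enlargement lemma is Lawson's theorem and your sketch of essential surjectivity (write $e=afb$ with $f\in E(N)$, normalize, and check $x=eaf$ has $xx^{*}=e$ and $x^{*}x\in E(N)$) is correct; your identification of $L(S)$ with the wide subcategory of split epimorphisms of $C(S)$ is also correct and gives $(2)\Rightarrow(3)$ cleanly. But the enlargement detour for $(1)\Rightarrow(2)$ must be replaced, e.g.\ by building the functor $C(S)\to C(T)$ directly from the context: for each $e\in E(S)$ choose $x_e\in X$ with $\langle x_e,x_e\rangle=e$, send the object $e$ to $[x_e,x_e]$ and the morphism $s\in eSf$ to $[x_e,sx_f]$; axioms (1)--(7) give functoriality and compatibility with the canonical inverse structure, and surjectivity of the pairings yields full, faithful, and essentially surjective. (Alternatively, quote Steinberg's result that strong Morita equivalence gives an equivalence of the categories of unitary actions and descend to the Cauchy completions.) For context: the paper does not prove this theorem at all --- it cites Funk--Lawson--Steinberg, which in turn rests on Steinberg's Morita paper and Funk's topos-theoretic work --- and there the joint-enlargement characterization is obtained with more care than the naive zero-adjoined matrix semigroup. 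Your converse directions $(2),(3)\Rightarrow(1)$ follow the right template (a bimodule of hom-sets), but as written they are sketches; in particular $(3)\Rightarrow(1)$ is not ``the same'' verbatim and still needs the seven axioms and surjectivity verified.
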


We shall say that $S$ and $T$ are \emph{Morita equivalent} if any of the above conditions hold. As alluded to above, if $S$ and $T$ are inverse semigroups where $S$ is an enlargement of $T$, then $S$ is Morita equivalent to $T$. This fact will be important in the proof of our main theorem (Theorem \ref{thm:MEgraph}). 

Morita equivalent inverse semigroups $S$ and $T$ share a number of algebraic properties that will be useful in this paper. For example, given an idempotent $e$ in $S$, there is an idempotent $f$ in $T$ such that $eSe \cong fTf$ \cite{SteinbergMorita}. A semigroup of the form $eSe$ is called a \emph{local submonoid} of $S$. If $P$ is some property of inverse semigroups, we say that \emph{$S$ satisfies $P$ locally} if $eSe$ satisfies $P$ for each idempotent $e$ in $S$. We record here a well-known fact about local submonoids.

\begin{prop}\label{propD} Suppose $S$ is an inverse semigroup and $e,f$ are idempotents in $S$ such that $e \mathcal{D} f$. Then there is an isomorphism $\pi : eSe \to fSf$ such that $s \, \mathcal{D} \, \pi(s)$ for all $s \in eSe$.
\end{prop}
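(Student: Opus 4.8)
The plan is to produce the isomorphism explicitly by conjugation. Since $e \mathcal{D} f$, the characterization of $\mathcal{D}$ recorded in the preliminaries gives an element $a \in S$ with $a^*a = e$ and $aa^* = f$. Define $\pi \colon eSe \to fSf$ by $\pi(s) = asa^*$. First I would check this is well defined, i.e.\ that $asa^* \in fSf$: using $f = aa^*$ together with the absorption identities $es = s = se$ valid for $s \in eSe$, one computes $f(asa^*) = a(a^*a)sa^* = a(es)a^* = asa^*$ and similarly $(asa^*)f = asa^*$. Next, $\pi$ is a homomorphism because $\pi(s)\pi(t) = as(a^*a)ta^* = as(et)a^* = a(st)a^* = \pi(st)$. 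Defining $\rho \colon fSf \to eSe$ by $\rho(u) = a^*ua$ (well defined by the symmetric computation), the same manipulations give $\rho(\pi(s)) = a^*asa^*a = ese = s$ and $\pi(\rho(u)) = u$, so $\pi$ is an isomorphism with inverse $\rho$.

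It remains to show $s \mathcal{D} \pi(s)$ in $S$ for every $s \in eSe$. Here I would set $y = as$. Then $y^*y = s^*a^*as = s^*es = s^*s$, while $yy^* = ass^*a^* = as(es)^*a^*$; and since $(asa^*)^* = a s^* a^*$ we get $\pi(s)\pi(s)^* = asa^*as^*a^* = as(es^*)a^* = ass^*a^* = yy^*$. Thus $y^*y = s^*s$ and $\pi(s)\pi(s)^* = yy^*$, which is exactly the witness required by the characterization $s \mathcal{D} t \iff \exists\, x\ (s^*s = x^*x,\ tt^* = xx^*)$; hence $s \mathcal{D} \pi(s)$.

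There is no serious obstacle here: the entire argument is a sequence of absorption computations in the inverse semigroup, and the only thing to be careful about is consistently using $es = s = se$ for elements of the local submonoid $eSe$ and $a^*a = e$, $aa^* = f$ at the right moments. I would present the well-definedness and homomorphism checks compactly and then devote a clean paragraph to the choice $y = as$, since that is the step that actually delivers the $\mathcal{D}$-equivalence asserted in the statement. (The map $\pi$ depends on the choice of $a$, but the statement only asserts existence of some such isomorphism, so this is harmless.)
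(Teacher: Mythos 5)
Your proposal is correct and is essentially the paper's own proof: the paper chooses $x$ with $x^*x = e$, $xx^* = f$ and declares that $\pi(s) = xsx^*$ works, while you simply supply the routine absorption computations (well-definedness, homomorphism, inverse $\rho(u) = a^*ua$, and the witness $y = as$ for $s \mathcal{D} \pi(s)$) that the paper leaves implicit. No gaps.
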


\begin{proof} Choose $x$ in $S$ such that $e = x^*x$ and $f = xx^*$. Then the map $\pi(s) = xsx^*$ satisfies the conclusions of the proposition. 
\end{proof}

\section{Morita Equivalence of Graph Inverse Semigroups}

We want to characterize the inverse semigroups that are Morita equivalent to graph inverse semigroups. Costa and Steinberg proved that two graph inverse semigroups are Morita equivalent if and only if the underlying graphs are isomorphic \cite[Corollary 8.5]{SteinbergCosta}. This result would seem to imply that one can recover the directed graph $\Gamma$ from any inverse semigroup that is Morita equivalent to the graph inverse semigroup $S(\Gamma)$. We start this section by doing just that. Given an inverse semigroup $S$ satisfying some additional conditions on the idempotent semilattice, we define a directed graph $\Gamma_{S}$ whose vertices are the nonzero $\mathcal{D}$-classes of $S$. When $S$ is Morita equivalent to the inverse semigroup of a directed graph $\Gamma$, $\Gamma_{S}$ is isomorphic to $\Gamma$ (Corollary \ref{cor:uniquegraph}). We then use $\Gamma_{S}$ as a tool to characterize those inverse semigroups that are Morita equivalent to graph inverse semigroups (Theorem \ref{thm:MEgraph}). 

Let $S$ be an inverse semigroup with $0$. Jones and Lawson \cite{LawsonGraph} define $S$ to be a \emph{Perrot inverse semigroup} if it satisfies the following properties:

\begin{enumerate}

\item[(P1)] The semilattice of idempotents is unambiguous.

\item[(P2)] For each nonzero idempotent $e$ there are finitely many idempotents above $e$ in the natural partial order.

\item[(P3)] $S$ admits unique maximal idempotents.

\item[(P4)] Each nonzero $\mcD$-class of $S$ contains a maximal idempotent.

\end{enumerate}

Also, $S$ is a \emph{proper} Perrot inverse semigroup if it satisfies the above properties and, in addition, there is a unique maximal idempotent in any nonzero $\mcD$-class. Jones and Lawson obtained the following characterization \cite{LawsonGraph}:

\begin{thm}[Jones, Lawson] The graph inverse semigroups are precisely the combinatorial proper Perrot semigroups.
\end{thm}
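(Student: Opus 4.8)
The plan is to prove the two inclusions separately; the direction ``graph inverse semigroup $\Rightarrow$ combinatorial proper Perrot'' is a routine unwinding of definitions, while the converse requires reconstructing the graph. Fix a directed graph $\Gamma$ and put $S=S_\Gamma$. The nonzero idempotents of $S$ are exactly the pairs $(\alpha,\alpha)$ with $\alpha\in\Gamma^*$, and $(\beta,\beta)\le(\alpha,\alpha)$ precisely when $\beta$ has $\alpha$ as an initial segment; hence the idempotents lying above a fixed nonzero $(\alpha,\alpha)$ form the finite chain indexed by the initial segments of $\alpha$, topped by $(r(\alpha),r(\alpha))$. This gives (P1), (P2), (P3) at once. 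A one-line computation with Green's relations shows $(\alpha,\beta)\mcL(\gamma,\nu)$ iff $\beta=\nu$ and $(\alpha,\beta)\mcR(\gamma,\nu)$ iff $\alpha=\gamma$, so $S$ is combinatorial; and $(\alpha,\alpha)\mcD(\beta,\beta)$ iff $s(\alpha)=s(\beta)$, so the nonzero $\mcD$-class of $(\alpha,\alpha)$ contains the single maximal idempotent $(s(\alpha),s(\alpha))$, which is (P4) together with properness.

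For the converse, let $S$ be a combinatorial proper Perrot inverse semigroup. By (P1)--(P3) the poset $E(S)\setminus\{0\}$ is a forest: the set of idempotents above a given nonzero idempotent is a finite chain with largest element a maximal idempotent, and unambiguity forces $ef=0$ whenever $e,f$ are incomparable, so incomparable idempotents have no common lower bound. By (P4) and properness the roots of this forest --- the maximal idempotents --- are in bijection with the nonzero $\mcD$-classes; for a class $\lambda$ write $v_\lambda$ for the corresponding root. The place where the combinatorial hypothesis enters is crucial: since $S$ is $\mcH$-trivial, whenever $e\mcD f$ there is a \emph{unique} $x\in S$ with $x^*x=e$ and $xx^*=f$; denote it $x_{e,f}$. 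These satisfy $x_{e,f}^*=x_{f,e}$ and $x_{f,g}x_{e,f}=x_{e,g}$, so the isomorphisms $\pi_{e,f}\colon eSe\to fSf$, $\pi_{e,f}(t)=x_{e,f}\,t\,x_{f,e}$, of Proposition \ref{propD} now form a coherent system rather than a system of arbitrary choices. I define a graph $\Gamma_S$ with vertex set $\{v_\lambda\}$ and, for each idempotent $f$ covered by a (necessarily unique) maximal idempotent $v_\lambda$, one edge with range $\lambda$ and source the $\mcD$-class of $f$.

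Next I would define, by induction on length, a map $\alpha\mapsto\mathbf{e}(\alpha)$ from $\Gamma_S^*$ to $E(S)\setminus\{0\}$: a vertex $v_\lambda$ maps to itself, and if $\alpha$ has top edge corresponding to the cover $f\lessdot v_\lambda$ and $\beta$ is the rest of $\alpha$ (a path ending at $v_{r(\beta)}$ with $f\mcD v_{r(\beta)}$), set $\mathbf{e}(\alpha)=\pi_{v_{r(\beta)},\,f}(\mathbf{e}(\beta))$. Using (P2) and the forest structure one checks that $|\alpha|$ equals the height of $\mathbf{e}(\alpha)$, that the idempotents above $\mathbf{e}(\alpha)$ are exactly the $\mathbf{e}(\alpha')$ for $\alpha'$ an initial segment of $\alpha$, that $\mathbf{e}(\alpha)\mcD v_{s(\alpha)}$, and the key identity $\mathbf{e}(\beta\delta)=\pi_{v_{s(\beta)},\,\mathbf{e}(\beta)}(\mathbf{e}(\delta))$; with the cocycle relations for the $x_{e,f}$ these make $\mathbf{e}$ a bijection onto $E(S)\setminus\{0\}$. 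Since $S$ is combinatorial, an element $s$ is determined by the pair $(ss^*,s^*s)$, so I may define $\Phi\colon S_{\Gamma_S}\to S$ by $\Phi(0)=0$ and $\Phi(\alpha,\beta)=x_{\mathbf{e}(\beta),\,\mathbf{e}(\alpha)}$ --- well defined because $s(\alpha)=s(\beta)$ forces $\mathbf{e}(\alpha)\mcD\mathbf{e}(\beta)$ --- and bijectivity of $\Phi$ follows from that of $\mathbf{e}$ together with properness. It remains to check $\Phi$ is a homomorphism. It visibly commutes with $*$, so by symmetry only one nonzero case of the product must be treated, and using the identity for $\mathbf{e}(\beta\delta)$ it reduces to the computation $x_{\mathbf{e}(\beta),\mathbf{e}(\alpha)}\,x_{\mathbf{e}(\nu),\mathbf{e}(\beta\gamma')}=x_{\mathbf{e}(\nu),\mathbf{e}(\alpha\gamma')}$, which follows from the cocycle relations; the ``$0$'' case follows because $\mathbf{e}(\beta)\mathbf{e}(\gamma)=0$ whenever $\beta,\gamma$ are incomparable paths, by unambiguity and the initial-segment description of the filters above an idempotent.

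The main obstacle is the bookkeeping in the converse: building $\mathbf{e}$, proving it is an order-compatible bijection, establishing the identity $\mathbf{e}(\beta\delta)=\pi_{v_{s(\beta)},\,\mathbf{e}(\beta)}(\mathbf{e}(\delta))$, and pushing the cocycle relation for the $x_{e,f}$ through the cases of the homomorphism check. None of this is conceptually deep once one sees that $\mcH$-triviality is exactly what upgrades the $\mcD$-class isomorphisms of Proposition \ref{propD} to a canonical coherent system --- without it, both $\Gamma_S$ and $\Phi$ would only be defined up to a cocycle's worth of choices, and the argument would not close.
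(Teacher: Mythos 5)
The paper does not actually prove this theorem --- it is imported from Jones--Lawson \cite{LawsonGraph} --- so the only comparison available is with the machinery the paper builds in Section 3, and your argument is essentially a (correct) strengthening of exactly that machinery. Your forward direction is the routine verification one expects and is right: idempotents above $(\alpha,\alpha)$ are the initial segments of $\alpha$, giving (P1)--(P3), and the $\mcL$/$\mcR$ computations give $\mcH$-triviality, (P4) and properness. Your converse runs parallel to the paper's construction of $\Gamma_S$ and the elements $s_\alpha$, with the choice of representatives $e_v$ made canonical by taking the unique maximal idempotent in each $\mcD$-class (available here by (P3), (P4) and properness, unlike in the paper's more general setting): your $x_{e,f}$ and $\mathbf{e}(\alpha)$ are the paper's $s_\alpha$ and $e_\alpha$ in disguise, your key identity $\mathbf{e}(\beta\delta)=\pi_{v_{s(\beta)},\mathbf{e}(\beta)}(\mathbf{e}(\delta))$ is the statement $e_{\beta\delta}=s_\beta e_\delta s_\beta^*$ implicit in Lemma \ref{lem:products}, and your zero case is the same unambiguity argument used inside the proof of Theorem \ref{thm:MEgraph}. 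What your choices buy is an isomorphism $\Phi$ rather than merely a Morita equivalence, which is precisely the extra content of the Jones--Lawson theorem, and you correctly identify $\mcH$-triviality as the point that makes the system $x_{e,f}$ coherent. Two compressed spots deserve a sentence each in a written-up version: the displayed identity $x_{\mathbf{e}(\beta),\mathbf{e}(\alpha)}\,x_{\mathbf{e}(\nu),\mathbf{e}(\beta\gamma')}=x_{\mathbf{e}(\nu),\mathbf{e}(\alpha\gamma')}$ does not follow from the cocycle relations alone, since the middle idempotents do not match ($\mathbf{e}(\beta\gamma')<\mathbf{e}(\beta)$ in general); one should compute the domain and range of the product (using $\mathbf{e}(\beta\gamma')\le\mathbf{e}(\beta)$ and the key identity) and then invoke $\mcH$-triviality to identify it with $x_{\mathbf{e}(\nu),\mathbf{e}(\alpha\gamma')}$. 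Likewise, surjectivity of $\Phi$ needs the observation that $s^*s=\mathbf{e}(\beta)$ and $ss^*=\mathbf{e}(\alpha)$ force $s(\alpha)=s(\beta)$ via $\mathbf{e}(\alpha)\mcD v_{s(\alpha)}$ and properness --- you flag this, but it is where properness is genuinely indispensable. With those details filled in, the proof is sound.
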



The semilattice of an inverse semigroup with $0$ is \emph{unambiguous} if for any idempotents $e,f$ such that $ef \neq 0$, $e$ and $f$ are comparable in the natural partial order. Now, if $S$ is Morita equivalent to a graph inverse semigroup, it must satisfy $(P1)$ locally and $(P2)$ locally. This follows since any local submonoid of $S$ is isomorphic to some local submonoid of the graph inverse semigroup. Notice that we can express the conditions that $S$ satisfies (P1) locally and (P2) locally respectively by:

\begin{enumerate}
\item[(P1L)] for $ e,f,g \in E(S)$ with $e,f \leq g$ and $ef \neq 0$, $e,f$ are comparable.
\item[(P2L)] for $0 \neq e \leq f$, there are finitely many idempotents between $e$ and $f$.
\end{enumerate}

Let $S$ be an inverse semigroup with zero satisfying (P2L). Define a directed graph $\Gamma_{S} = (\Gamma_{S}^0,\Gamma_{S}^1,r,s)$ as follows. First, let $\Gamma_{S}^0$ to be the set of nonzero $\mathcal{D}$-classes of $S$. Denote by $[f]$ the $\mathcal{D}$-class of an idempotent $f$. Next, for each $v \in \Gamma_{S}^0$, choose a nonzero idempotent $e_v \in E(S)$ such that $v = [e_v]$. We write $e \ll f$ and say $e$ \emph{lies directly under} $f$ if and only if $e < f$ and there is no idempotent $g$ with $e < g < f$. Since $e_v S e_v$ satisfies (P2), $e_v^{\downarrow} - \{e_v\} = \{f \in E(S): f < e_v\}$ contains a set 
\[
A_v = \{f \in E(S): 0 \neq f \ll e_v \}
\]
of nonzero maximal idempotents. Moreover, every nonzero idempotent strictly below $e_v$ lies under some element of $A_v$. For each $f \in A_v$ we include an edge $x_{v,f}$ with source $[f]$ and range $v$. That is, 
\[
  \Gamma_{S}^1 = \bigcup_{v \in {\Gamma_{S}^0}} \{ x_{v,f} : f \in A_v \}
\]
and for each $x_{v,f} \in \Gamma_{S}^1$, we define $s(x_{v,f}) = [f]$ and $r(x_{v,f}) = v$. In this construction we made a choice of representative $e_v$ for each $\mathcal{D}$-class $v$. By Proposition \ref{propD}, the number of idempotents $f$ with $f \ll e_v$ and their corresponding $\mathcal{D}$-classes are the same regardless of the choice of $e_v$. Thus the directed graph $\Gamma_S$ does not depend on the choice of representatives.

Now, suppose that $S$ is a combinatorial inverse semigroup with $0$ satisfying (P2L). Based on these assumptions we define a set $\{s_{\alpha} : \alpha \in \Gamma_{S}^*\}$ in $S$ as follows:
\begin{enumerate}

\item for $v \in \Gamma_S^{0}$, $s_v:=e_v$.

\item for each edge $x = x_{v,f}$ in $\Gamma_S^{1}$, $s_x$ is defined to be the unique element of $S$ such that $s_x^* s_x = e_{[f]}$ and $s_x s_x^* = f$; and

\item for each path $\alpha = x_1 x_2 \cdots x_n$, define $s_\alpha := s_{x_1} s_{x_2} \cdots s_{x_n}$. 

\end{enumerate} 

Also, given a path $\alpha$, define $e_{\alpha} := s_{\alpha} s_{\alpha}^*$. First, we introduce some important lemmas:

\begin{lem}\label{lem:graphidemp}
Let $S$ be a combinatorial inverse semigroup that satisfies (P2L) and let $\Gamma_S$ be the associated directed graph. Define $\{s_{\alpha} : \alpha \in \Gamma_{S}^*\}$ as above. For $\alpha \in \Gamma_S$, we have $s_\alpha^* s_\alpha=e_{s(\alpha)}$. Also, for each nonzero idempotent $e \leq e_v$, there is a path $\alpha$ in $\Gamma_S^*$ such that $e = e_{\alpha}$ and $r(\alpha) = v$.
\end{lem}

\begin{proof}
Let $\alpha = x_1 x_2 \cdots x_n$ be a path where $x_i = x_{v_i, f_i}$ and $v_{i+1} = [f_i]$ for $i \leq n-1$. Notice that $s_{x_i}^* s_{x_i} = e_{[f_i]} = e_{v_{i+1}}$. Also, since $ s_{x_{i}}s_{x_{i}}^* \leq e_{v_{i}}$ we have $s_{x_{i}}^* e_{v_{i}}s_{x_{i}} = s_{x_{i}}^* s_{x_{i}}$ for all $i$. So
\begin{align*}
s_{\alpha}^* s_{\alpha} &= s_{x_n}^* \cdots s_{x_2}^* (s_{x_1}^* s_{x_1}) s_{x_2} \cdots s_{x_n}\\
&= s_{x_n}^* \cdots s_{x_2}^* (e_{v_2}) s_{x_2} \cdots s_{x_n} \\
&= s_{x_n}^* \cdots (s_{x_2}^* s_{x_2}) \cdots s_{x_n} \\
&\, \cdots \\
&= s_{x_n}^* s_{x_n} = e_{s(\alpha)}
\end{align*}

For the second claim, suppose that $0 \neq e \leq e_v$ for some $v \in \Gamma^0_{S}$. By (P2L) there are idempotents $e_i$ for $1 \leq i \leq n$ such that $e = e_1 \ll e_2 \ll \dots e_{n-1} \ll e_n = e_v$. We will induct on $n$. In the case $n=1$, $e = e_v$ and we are done. For $n > 1$ we assume that $e_2 = e_{\alpha'}$ where $\alpha' \in \Gamma_S^*$ with $r(\alpha') = v$. Let $u = s(\alpha')$. As in Proposition \ref{propD}, the map $s \mapsto s_{\alpha'}^*s{s_{\alpha'}}$ defines an isomorphism from $e_2 S e_2$ to $e_u S e_u$. In particular, $f := s_{\alpha'}^* e {s_{\alpha'}} \ll e_u$. Thus $x_{u,f} \in \Gamma^1_S$ with $r(x_{u,f}) = u$. We have $\alpha =  \alpha' x_{u,f} \in {\Gamma_{S}}^*$ with $r(\alpha) = v$. Finally,
\[
	e_{\alpha} = s_{\alpha} s_{\alpha}^* =  s_{\alpha'} s_{x_{u,f}} s_{x_{u,f}}^* s_{\alpha'}^* = s_{\alpha'} f s_{\alpha'}^* =  s_{\alpha'} s_{\alpha'}^* e {s_{\alpha'}} s_{\alpha'}^* = e
\]
since $e \leq e_2 = s_{\alpha'} s_{\alpha'}^*$.

\end{proof}

As a consequence of the lemma we note that for each path $\alpha \in \Gamma_S^{*}$ we have $s_\alpha \neq 0$ and $e_\alpha \neq 0$, since otherwise $e_{s(\alpha)} = s_\alpha^* s_\alpha = 0$. 

For what follows we will need the additional assumption that there is an orthogonal set of idempotent representatives of the nonzero $\mathcal{D}$-classes of $S$. That is, $e_u e_v = 0$ for $u \neq v$. Note that it is not possible to choose such a set for general $S$, but we will overcome this difficulty later.

\begin{lem}\label{lem:multiplication}Let $S$ be a combinatorial inverse semigroup that satisfies (P1L) and (P2L), and suppose that $\{e_v: v \in \Gamma_S^{0}\}$ is an orthogonal set of idempotent representatives of the nonzero $\mathcal{D}$-classes of $S$. Let $\Gamma_S$ be the associated directed graph, and let $\{s_{\alpha} : \alpha \in \Gamma_{S}^*\}$ be as defined above. Then for $\alpha,\beta$ be in $\Gamma^{*}$,
 \[ s_{\alpha}^*s_\beta = \left\{\begin{array}{ll}
        s_{\beta'} & \mbox{if $\beta = \alpha \beta'$} \\
        s_{\alpha'}^*  & \mbox{if $\alpha = \beta \alpha'$}\\
        0             & \mbox{otherwise}\\ \end{array} \right. \]
\end{lem}
\begin{proof}

Suppose $\alpha,\beta$ in $\Gamma^{*}$. If $\beta=\alpha\beta'$, then $s_\beta=s_\alpha s_{\beta'}=s_\alpha(s_\alpha^* s_\alpha)(s_{\beta'}s_{\beta'}^*)s_{\beta'}$. As remarked above, $s_{\beta} \neq 0$, so $e_{s(\alpha)}(s_{\beta'}s_{\beta'}^*) = (s_\alpha^* s_\alpha)(s_{\beta'}s_{\beta'}^*)\neq 0$. We have $s_{\beta'}s_{\beta'}^* \leq e_{r(\beta')}$. Then $e_{r(\beta')}e_{s(\alpha)} \neq 0$ and by orthogonality $e_{r(\beta')} = e_{s(\alpha)}$. Thus $s_{\beta'}s_{\beta'}^* \leq s_\alpha^* s_\alpha$. So $s_\alpha^* s_\beta = s_\alpha^* s_\alpha s_{\beta'}=(s_\alpha^* s_\alpha)(s_{\beta'}s_{\beta'}^*)s_{\beta'}=(s_{\beta'}s_{\beta'}^*)s_{\beta'}=s_{\beta'}$

The case where $\alpha = \beta \alpha'$ is similar.

For the last case, first suppose that $x = x_{v,f} ,x' = x_{u,g} \in \Gamma_{S}^{1}$ are edges such that $s_x^*s_{x'}\not=0$. We will show that $x = x'$. Notice that $(s_x s_x^*)(s_x' s_x'^*) \neq 0$, so $f \ll e_v$, $g \ll e_u$, and $fg \neq 0$. Since we chose an orthogonal set of representatives, $u = v$. Since $S$ satisfies (P1L), $f$ and $g$ are comparable. But both idempotents lie directly under $e_u$, so $f = g$. Thus $x = x'$.

Now suppose $\alpha$ is not a subpath of $\beta$ and $\beta$ is not a subpath of $\alpha$. The cases where one or both of $\alpha$ and $\beta$ are trivial paths (vertices) is left to the reader. We assume $\alpha$ and $\beta$ are nontrivial paths.  Then for some paths $w,q,q'$ and distinct edges $x\not=x'$, we have $\alpha=wxq$ and $\beta=wx'q'$. Then,
\[s_\alpha^*s_\beta = s_q^* s_x^* (s_w^* s_w) s_{x'} s_{q'} \le s_q^* s_x^* s_{x'} s_{q'}= s_q^*(s_x^* s_{x'})s_{q'}=0,\]
as desired.
\end{proof}

\begin{lem}\label{lem:uniquepaths} Let $S$ be a combinatorial inverse semigroup that satisfies (P1L) and (P2L), and suppose that $\{e_v: v \in \Gamma_S^{0}\}$ is an orthogonal set of idempotent representatives of the nonzero $\mathcal{D}$-classes of $S$. Let $\Gamma_S$ be the associated directed graph, and let $\{s_{\alpha} : \alpha \in \Gamma_{S}^*\}$ be as defined above. If $\alpha,\beta$ are paths in $\Gamma_{S}^*$, then $\alpha=\beta$ if and only if $e_\alpha=e_\beta$.
\end{lem}

\begin{proof}
If $\alpha=\beta$, then $e_\alpha=e_\beta$ by definition. Suppose for paths $\alpha$ and $\beta$ that $e_\alpha=e_\beta$. Then
 \[e_\alpha=e_\alpha e_\beta=s_\alpha s_\alpha^* s_\beta s_\beta^*\not=0,\]
so $s_\alpha^* s_\beta\not=0$ and $\alpha$ is a subpath of $\beta$ or $\beta$ is a subpath of $\alpha$. For the sake of contradiction, assume $\alpha$ is a proper subpath of $\beta$. That is, $\beta=\alpha x_1x_2\dots x_k$ for some edges $x_1,x_2,\dots,x_k$. From, $e_\alpha=e_\beta$, we have $s_\alpha s_\alpha^* = s_\beta s_\beta^*$ and hence
\begin{align*}
e_{s(\alpha)} = s_\alpha^* s_\alpha &=s_\alpha^*(s_\beta s_\beta^*)s_\alpha=(s_\alpha^* s_\beta)(s_\beta^* s_\alpha)=s_{x_1x_2\dots x_k}s_{x_1x_2\dots x_k}^*\\&=s_{x_1}s_{x_2}\dots s_{x_k}s_{x_k}^*s_{x_{k-1}}^*\dots s_{x_1}^* \le s_{x_1}s_{x_1}^* \ll e_{r(x_1)}
\end{align*}
This is a contradiction, since by our hypothesis on the $\mathcal{D}$-class representatives we know that either $e_{s(\alpha)} = e_{r(x_1)}$ or $e_{s(\alpha)} e_{r(x_1)} = 0$. The other case is similar, so we have shown that $\alpha=\beta$.
\end{proof}

\begin{thm}\label{thm:graphembed} Let $S$ be a combinatorial inverse semigroup that satisfies (P1L) and (P2L), and suppose that $\{e_v: v \in \Gamma_S^{0}\}$ is an orthogonal set of idempotent representatives of the nonzero $\mathcal{D}$-classes of $S$. Then the map $ (\alpha, \beta) \mapsto s_{\alpha}s_{\beta}^*$ and $0 \mapsto 0$ defines an embedding of the graph inverse semigroup of $\Gamma_S$ into $S$. 
\end{thm}

\begin{proof} First we show that the map $\pi : S_{\Gamma_{S}} \to S$ defined by $\pi( (\alpha, \beta) ) = s_{\alpha}s_{\beta}^*$ and $\pi(0) = 0$ is a homomorphism. Let $(\alpha, \beta), (\mu, \nu) \in S_{\Gamma_{S}}$. It follows from Lemma \ref{lem:multiplication} that 
\[ s_{\alpha} s_{\beta}^* s_{\mu} s_{\nu}^* = \left\{\begin{array}{ll}
       s_{\alpha \mu'} s_{\nu}^* & \mbox{if $\mu  = \beta \mu'$} \\
         s_{\alpha} s_{\nu \beta'}^* & \mbox{if $\beta = \mu \beta'$} \\
         0 & \mbox{otherwise} \end{array} \right. \]
Comparing this with the multiplication operation in a graph inverse semigroup (see section 2), we quickly see that $\pi$ is a homomorphism. Let $t_1 = s_{\alpha} s_{\beta}^*$ and $t_2 = s_{\mu} s_{\nu}^*$ and suppose that $t_1 = t_2$. Notice 
\[
t_1^* t_1 = s_{\beta} s_{\alpha}^* s_{\alpha} s_{\beta}^* = s_{\beta} e_{s(\alpha)} s_{\beta}^* = s_{\beta} e_{s(\beta)} s_{\beta}^* = s_{\beta} s_{\beta}^* = e_{\beta}
\] 
since $s_{\beta}^* s_{\beta} = e_{s(\beta)}$. Similarly $t_1 t_1^* = e_{\alpha}$, $t_2^* t_2 = e_{\nu}$, and $t_2 t_2^* = e_{\mu}$. We have $e_{\alpha} = e_{\mu}$ and $e_{\beta} =  e_{\nu}$. Thus $\alpha = \mu$ and $\beta = \nu$ by Lemma \ref{lem:uniquepaths}. Therefore $\pi$ is injective.

\end{proof}

Of course, not every inverse semigroup will admit an orthogonal set of idempotent representatives of its nonzero $\mathcal{D}$-classes. To overcome this obstacle, we work with an enlarged version of $S$. Given an inverse semigroup $S$ with $0$, define
\[
	S' = \{(e,s,f) : ss^* \leq e, s^*s \leq f, \text{ and $s \neq 0$}\} \cup \{0\}
\]
with multiplication given by 
\[ (e,s,f)(g,t,h) = \left\{\begin{array}{ll}
       (e,st,h) & \mbox{if $f = g$ and $st \neq 0$} \\
         0 & \mbox{otherwise} \end{array} \right. \]

Then $S'$ is an inverse semigroup with $0$ that is Morita equivalent to $S$ (See section 6 of \cite{REU2018}). The nonzero idempotents of $S'$ are exactly $(e,f,e)$ such that $0 \neq f \leq e$. Moreover, if $\{e_v: v \in \Gamma^0\}$ is a collection of idempotent representatives of the nonzero $\mathcal{D}$-classes of $S$, then $\{(e_v, e_v, e_v): v \in \Gamma^0\}$ is an orthogonal set of representatives of the nonzero $\mathcal{D}$-classes of $S'$. Moreover, each representative is maximal in the natural partial order. Notice that $S'$ has an additional useful property: given two idempotents $x,y$ in $S'$ with nonzero product, there is an idempotent $z$ in $S'$ such that $x,y \leq z$.
 
We are now prepared to prove our main theorem.

\begin{thm}\label{thm:MEgraph} An inverse semigroup $S$ is Morita equivalent to a graph inverse semigroup if and only if $S$ is combinatorial, has a $0$, and satisfies (P1L) and (P2L).
\end{thm}

\begin{proof}
As discussed at the beginning of this section, any inverse semigroup that is Morita equivalent to a graph inverse semigroup satisfies (P1L) and (P2L). Such semigroups must also be combinatorial and contain a zero by \cite[Corollary 5.2]{SteinbergMorita}. 

Suppose that $S$ is combinatorial, has a $0$, and satisfies (P1L) and (P2L). Then $S'$ is Morita equivalent to $S$ and hence satisfies the same hypotheses. Moreover, $S'$ contains an orthogonal set of idempotent representatives $\{e_v : v \in \Gamma^0_{S'}\}$ of its nonzero $\mathcal{D}$-classes such that each $e_v$ is maximal in the natural partial order. It suffices to show that $S'$ is Morita equivalent to a graph inverse semigroup. By Theorem \ref{thm:graphembed}, $T = \{s_{\alpha}s_{\beta}^* : (\alpha,\beta) \in S_{\Gamma_{S'}}\} \cup \{0\}$ is an inverse subsemigroup of $S'$ that is isomorphic to the graph inverse semigroup of $\Gamma_{S'}$. 

We show that $T$ is Morita equivalent to $S'$ by showing that $S'$ is an enlargement of $T$. That is, we prove that $S' = S'TS'$ and $T = TS'T$. Note that the containments $S'TS' \subseteq S'$ and $T \subseteq TS'T$ are immediate. Next, suppose that $s \in S'$. If $s=0$ then we know $s \in S'TS'$. Otherwise there exists $v \in \Gamma^0_{S'}$ and $x \in S'$ such that $ss^* = xx^*$ and $e_v = x^*x$. But then $s = ss^* s s^* s = x x^* x x^* s = x (e_v) x^* s \in S'TS'$. 

Next, let $y = t_1 s t_2$ with $t_1, t_2 \in T$ and $s \in S'$. Write $t_1 = s_{\alpha} s_{\beta}^*$ and $t_2 = s_{\mu} s_{\nu}^*$. If $y = 0$, then $y \in T$. Otherwise we have $s_{\beta} s_{\beta}^* s s^* \neq 0$. Thus $e_{r(\beta)} s s^* \neq 0$. As remarked before the theorem, there is an idempotent $z$ such that $e_{r(\beta)}, s s^* \leq z$. As $e_{r(\beta)}$ is a maximal idempotent in $S'$, $e_{r(\beta)} = z$ and we have that $ss^* \leq e_{r(\beta)}$. By Lemma \ref{lem:graphidemp}, there is a path $\epsilon \in \Gamma^*_{S'}$ with $r(\epsilon) = r(\beta)$ such that $ss^* = s_{\epsilon}s_{\epsilon}^*$. Similarly there exists a path $\delta$ with $r(\delta) = r(\mu)$ such that $s^* s = s_{\delta}s_{\delta}^*$. Thus $s \mathcal{H} s_{\epsilon} s_{\delta}^*$. As $S'$ is combinatorial, we have $s = s_{\epsilon}s_{\delta}^* \in T$ and hence $y \in T$.

Therefore $S$ is Morita equivalent to a graph inverse semigroup.

\end{proof}

\begin{rem} Though we proved in the last theorem that $S$ is Morita equivalent to the inverse semigroup of the graph $\Gamma_{S'}$, we note that this directed graph is the same as $\Gamma_{S}$. To see this, first notice that the map $s \mapsto (e_v, s, e_v)$ defines an isomorphism from $e_v S e_v$ to $(e_v, e_v, e_v) S' (e_v, e_v, e_v)$. Moreover for $s,t$ in $e_v S e_v$, $s\mathcal{D}t$ if and only if $(e_v, s, e_v) \mathcal{D} (e_v, t, e_v)$. Since the directed graph of an inverse semigroup is defined based on the nonzero $\mathcal{D}$-class representatives and the $\mathcal{D}$-classes of the idempotents directly below each representative, we have that $\Gamma_S$ is isomorphic to $\Gamma_{S'}$.
\end{rem}

\begin{cor}\label{cor:uniquegraph} Let $S$ be an inverse semigroup that is Morita equivalent to the graph inverse semigroup $S_{\Gamma}$. Then $\Gamma_S$ is isomorphic to $\Gamma$.
\end{cor}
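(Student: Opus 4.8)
The plan is to reduce the statement to the theorem of Costa and Steinberg that two graph inverse semigroups are Morita equivalent if and only if their underlying graphs are isomorphic, using Theorem~\ref{thm:MEgraph} together with the fact that Morita equivalence is an equivalence relation.

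First I would note that, since $S$ is Morita equivalent to the graph inverse semigroup $S_{\Gamma}$, the discussion preceding Corollary~\ref{cor:MEgraph} (equivalently, Corollary~\ref{cor:MEgraph} itself) shows that $S$ is combinatorial, has a $0$, and satisfies (P1) locally and (P2) locally. In particular the graph $\Gamma_S$ is defined and Theorem~\ref{thm:MEgraph} applies to $S$, giving a Morita equivalence between $S$ and the graph inverse semigroup of $\Gamma_S$.

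Next I would invoke symmetry and transitivity of Morita equivalence. By Theorem~\ref{thm:ME}, Morita equivalence of inverse semigroups is the same as equivalence of the categories $C(-)$, and equivalence of categories is symmetric and transitive; hence Morita equivalence is an equivalence relation. Since $S$ is Morita equivalent both to $S_{\Gamma}$ (by hypothesis) and to $S_{\Gamma_S}$ (by the previous step), it follows that $S_{\Gamma}$ and $S_{\Gamma_S}$ are Morita equivalent graph inverse semigroups. Applying \cite[Corollary 8.5]{SteinbergCosta} to this pair yields $\Gamma \cong \Gamma_S$, which is the desired conclusion. I do not expect any real obstacle: the argument is a short assembly of results already established, and the only point needing a word of justification is transitivity of Morita equivalence, which is immediate from the category-equivalence characterization in Theorem~\ref{thm:ME}.
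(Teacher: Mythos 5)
Your proposal is correct and is essentially the paper's own argument: the paper likewise applies Theorem~\ref{thm:MEgraph} to conclude $S_{\Gamma_S}$ is Morita equivalent to $S_{\Gamma}$ and then invokes \cite[Corollary 8.5]{SteinbergCosta}. You simply make explicit the points the paper leaves implicit (that $S$ satisfies the hypotheses so that $\Gamma_S$ is defined, and the transitivity of Morita equivalence), which is fine.
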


\begin{proof} By Theorem \ref{thm:MEgraph} and the remark that follows the theorem, $S_{\Gamma_S}$ is Morita equivalent to $S_{\Gamma}$. Thus by \cite[Corollary 8.5]{SteinbergCosta} we have $\Gamma_S$ is isomorphic to $\Gamma$. 
\end{proof}

Finally, we note a useful consequence of the results in this section that allow us to quickly check Morita equivalence of two inverse semigroups. 

\begin{cor} Let $S$ and $T$ be combinatorial inverse semigroups with $0$ that satisfy (P1L) and (P2L). Then $S$ is Morita equivalent to $T$ if and only if $\Gamma_S$ is isomorphic to $\Gamma_T$.
\end{cor}

\section{Path Categories}

In this section we consider a generalization of graph inverse semigroups to those semigroups associated with left cancellative categories. The case was made in \cite{REU2018} that the path category plays a role similar to that of the paths in the directed graph for inverse semigroups that admit unique maximal idempotents. To what extent do the results in the previous section extend to these inverse semigroups? In fact, every inverse semigroup with $0$ is Morita equivalent to an inverse semigroup satisfying (P3) and (P4) \cite[Corollary 6.2]{REU2018}. Also, any semigroup satisfying (P3) and (P4) is isomorphic to the canonical inverse semigroup associated with its path category. So Theorem \ref{thm:MEgraph} has the rather decisive (but not new) generalization that every inverse semigroup is Morita equivalent to an inverse semigroup associated with a left cancellative category. It is interesting to determine whether the path category alone can be used to characterize Morita equivalence for such semigroups. 

If $S$ is an inverse semigroup satisfying (P3), then for each nonzero idempotent $e$ there is a unique maximal idempotent $e^{\circ}$ such that $e \leq e^{\circ}$. We recall the definition of the path category $P(S)$ from \cite{REU2018}. The objects are the maximal idempotents of $S$. The arrows in $P(S)$ are the pairs $(e, s)$ such that $s^*s$ is maximal and $({ss^*})^{\circ} = e$. Composition is given by
\[
	(e,s)(f,t) = (e,st)
\]
provided $s^*s = f$.

\begin{prop} Suppose $S$ is an inverse semigroup with $0$ satisfying (P3) and (P4). Then $P(S)$ is equivalent to $L(S)$.
\end{prop}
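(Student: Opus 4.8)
The plan is to construct an explicit equivalence functor $\Phi : P(S) \to L(S)$ and verify the usual three conditions: faithfulness, fullness, and essential surjectivity on objects. Recall that the objects of $L(S)$ are \emph{all} nonzero idempotents of $S$ (together with $0$), while the objects of $P(S)$ are only the maximal idempotents; the morphisms of $L(S)$ from $f$ to $g$ are triples $(f, s, s^*s)$ with $ss^* \leq f$ and $s^*s = g$, whereas an arrow of $P(S)$ from $f$ to $e$ is a pair $(e,s)$ with $s^*s = f$, $s^*s$ maximal, and $(ss^*)^\circ = e$. So a natural guess is $\Phi(e) = e$ on objects (a maximal idempotent \emph{is} a nonzero idempotent) and $\Phi((e,s)) = ((ss^*)^\circ, s, s^*s) = (e, s, s^*s)$ on arrows, which lands in $L(S)$ precisely because $ss^* \leq (ss^*)^\circ = e$.

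First I would check that $\Phi$ is a well-defined functor: it sends the identity arrow at a maximal idempotent $e$, which is $(e,e)$, to $(e,e,e) = \id_e$ in $L(S)$, and it respects composition, since $(e,s)(f,t) = (e,st)$ maps to $(e, st, (st)^*(st))$, and one checks $(st)^*(st) = s^*s$ when $s^*s = f = tt^*$... here I must be a little careful: composability in $P(S)$ requires $s^*s = f$ where $f$ is the source object of $(f,t)$, i.e.\ $f = tt^{*\circ}$? No — the source of $(f,t)$ is the object $f$, and $(f,t)$ being an arrow means $t^*t = f$ and $(tt^*)^\circ = f$... wait, that is wrong too; re-reading, an arrow $(f,t)$ has $t^*t$ maximal and $(tt^*)^\circ$ equal to the \emph{range} object. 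So $(e,s)(f,t) = (e,st)$ is defined when $s^*s = $ (the range of $(f,t)$) $= f$, i.e.\ when $s^*s = (tt^*)^\circ$. Then $st$ satisfies $(st)(st)^* \leq ss^*$ so $((st)(st)^*)^\circ = (ss^*)^\circ = e$, and $(st)^*(st) = t^*s^*st = t^* (tt^*)^\circ t$; since $tt^* \leq (tt^*)^\circ$ we get $t^*(tt^*)^\circ t = t^* tt^* t = t^*t$, which is maximal, so $(e,st)$ is a genuine arrow and $\Phi$ respects composition. Faithfulness is immediate: $\Phi((e,s)) = \Phi((e,s'))$ forces $s = s'$ directly from the second coordinate.

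For fullness, given an arrow $(f, s, s^*s)$ of $L(S)$ with $f, s^*s$ nonzero idempotents (not necessarily maximal) — wait, but the objects of $L(S)$ are all nonzero idempotents, so I cannot hit an arrow whose source or target is non-maximal by an object-preserving functor. This is where the equivalence (rather than isomorphism) really matters: I only need $\Phi$ to be full as a functor, and fullness is a condition on hom-sets between objects \emph{in the image}. So fullness reads: for maximal $e, f$ and any arrow $(e, s, s^*s)$ in $L(S)$ with $ss^* \leq e$ and $s^*s = f$ maximal, there is an arrow of $P(S)$ mapping to it; take $(e, s)$ — it is an arrow of $P(S)$ since $s^*s = f$ is maximal and $(ss^*)^\circ = e$ because $ss^* \leq e$ and $e$ is maximal (using (P3), the maximal idempotent above $ss^*$ is unique, and $e$ is one such). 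Essential surjectivity on objects is the step that uses (P4): given a nonzero idempotent $g$ of $S$, by (P4) the $\mcD$-class of $g$ contains a maximal idempotent $e$, so $g \mcD e$, hence $g$ and $e$ are isomorphic objects in $L(S)$ (isomorphic objects in $C(S)$, hence in $L(S)$, correspond exactly to $\mcD$-related idempotents, as recalled in the preliminaries); and $e = \Phi(e)$. Thus every object of $L(S)$ is isomorphic to one in the image of $\Phi$.

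The main obstacle I anticipate is bookkeeping around the precise definition of arrows in $P(S)$ and getting the directions (source versus range, which coordinate carries the maximal idempotent) exactly right, together with the repeated use of the identity $t^* (tt^*)^\circ t = t^*t$ — i.e.\ that multiplying by a larger idempotent on the fattened side does nothing. A secondary point worth stating carefully is that isomorphic objects in $L(S)$ are precisely $\mcD$-related idempotents; this is asserted for $C(S)$ in the preliminaries, and the same argument applies in $L(S)$ since the isomorphisms there are the elements $(s^*s, s, ss^*)$, so $e \cong f$ in $L(S)$ iff $e = s^*s$, $f = ss^*$ for some $s$, iff $e \mcD f$. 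With that in hand, essential surjectivity is exactly condition (P4), and the rest is the routine verification sketched above.
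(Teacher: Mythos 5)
Your proposal is correct and follows essentially the same route as the paper: your functor $\Phi$ is exactly the embedding of $P(S)$ into $L(S)$ as the triples $(e,s,s^*s)$ with $e$ and $s^*s$ maximal, with fullness and faithfulness coming from (P3) and essential surjectivity from (P4) together with the fact that isomorphism of objects in $L(S)$ is $\mathcal{D}$-equivalence. You have merely written out in detail the verifications the paper calls ``straightforward,'' so there is nothing substantively different to compare.
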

\begin{proof}
It is straightforward to verify that $P(S)$ embeds in $L(S)$ as the subcategory of triples $(e,s,s^*s)$ where $e$ and $s^*s$ are maximal idempotents. As remarked in the preliminaries section, two objects $e,f$ in $L(S)$ are isomorphic if and only if $e \mathcal{D} f$. Therefore, by (P4), the inclusion of $P(S)$ into $L(S)$ is an equivalence of categories.
\end{proof}

Thus we quickly derive the following corollary as a consequence of the result of Funk, Lawson, and Steinberg (see Theorem \ref{thm:ME}).

\begin{cor} Suppose $S$ and $T$ are inverse semigroups with $0$ satisfying (P3) and (P4). Then $S$ is Morita equivalent to $T$ if and only if $P(S)$ is equivalent to $P(T)$.
\end{cor}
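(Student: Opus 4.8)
The plan is to reduce the statement entirely to Theorem \ref{thm:ME} by way of the preceding Proposition. Since both $S$ and $T$ are inverse semigroups with $0$ satisfying (P3) and (P4), the Proposition supplies equivalences of categories $P(S)\simeq L(S)$ and $P(T)\simeq L(T)$. On the other hand, Theorem \ref{thm:ME} asserts that $S$ is Morita equivalent to $T$ precisely when $L(S)$ is equivalent to $L(T)$. So the corollary follows by transporting the equivalence $L(S)\simeq L(T)$ across these two identifications.

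Concretely, I would first record the standard fact that equivalence of categories is an equivalence relation: it is reflexive via the identity functor, symmetric because a quasi-inverse of an equivalence is again an equivalence, and transitive because the composite of two equivalences is an equivalence. (This is already tacitly used in the formulation of Theorem \ref{thm:ME}, so no separate argument is needed.) Then the proof is a short chain: if $S$ is Morita equivalent to $T$, Theorem \ref{thm:ME} gives $L(S)\simeq L(T)$, and composing with $P(S)\simeq L(S)$ and $L(T)\simeq P(T)$ yields $P(S)\simeq P(T)$; conversely, if $P(S)\simeq P(T)$, composing with $L(S)\simeq P(S)$ and $P(T)\simeq L(T)$ gives $L(S)\simeq L(T)$, whence $S$ is Morita equivalent to $T$, again by Theorem \ref{thm:ME}.

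I do not anticipate any real obstacle here; the entire content is the two equivalences furnished by the Proposition together with Theorem \ref{thm:ME}, and the only "work" is the bookkeeping that category equivalences compose and invert. If brevity is desired, the whole proof can be compressed to a single sentence citing the Proposition and Theorem \ref{thm:ME}.
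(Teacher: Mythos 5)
Your proposal is correct and matches the paper's reasoning exactly: the paper states the corollary as an immediate consequence of the preceding Proposition ($P(S)\simeq L(S)$) together with Theorem \ref{thm:ME}, which is precisely the composition-of-equivalences argument you spell out. No gap; your version just makes the bookkeeping explicit.
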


%
%
%

\bibliographystyle{amsplain}
\bibliography{SemigroupBib.bib}
\end{document}